\newcommand{\nocontentsline}[3]{}
\newcommand{\tocless}[2]{\bgroup\let\addcontentsline=\nocontentsline#1{#2}\egroup}
\newtheorem{theo}{Th\'eor\`eme}[section]
\newtheorem{lemme}[theo]{Lemme}
\newtheorem{proposition}[theo]{Proposition}
\newtheorem{corollaire}[theo]{Corollaire}
\newtheorem{remarque}[theo]{Remarque}
\begin{document}

\title[Images de représentations galoisiennes]{Images de représentations galoisiennes associées à certaines formes modulaires de Siegel de genre $2$}
\author{Salim Tayou}
\date\today
\maketitle

\tocless\subsection*{\textbf{Abstract}}. We study the image of the $\ell$-adic Galois representations associated to the four  vector valued  Siegel modular forms appearing in the work of  Chenevier and  Lannes \cite{chenevier}. These representations are symplectic of dimension $4$. Following  methods used by Dieulefait in \cite{dieulefait}, we determine the primes $\ell$ for which these representations are absolutely irreducible. In addition, we show that their image is "full" for all  primes $\ell$ such that the associated residual representation is absolutely irreducible, except in two cases.  
\tableofcontents

\section{Introduction}

Soient $j,k$ des entiers $\geq 0$ et ${\mathrm S}_{j,k}(\Gamma_2)$ l'espace des formes modulaires paraboliques de Siegel pour le groupe $\mathrm {Sp}_4(\mathbb{Z})=:\Gamma_2$ à coefficients vectoriels $\mathrm{Sym}^j \mathbb{C}^2 \otimes \mathrm{\det}^k$ (on dira aussi "de poids $(j,k)$"). Dans ce travail, on s'intéresse à la détermination des images des représentations galoisiennes $\ell$-adiques associées à quatre telles formes modulaires de poids dans la liste $\{(6,8), (8,8), (4,10), (12,6)\}$. Notre motivation provient du travail de Chenevier et Lannes sur le comptage des $p$-voisinages au sens de Kneser entre réseaux de Niemeier (voir \cite{chenevier}), dans lequel ces formes interviennent de manière essentielle.

On rappelle que l'espace $\mathrm{S}_{j,k}(\Gamma_2)$ est muni d'une action d'opérateurs de Hecke (voir par exemple le chapitre $16$ de \cite{123})). Soit $f \in {\rm S}_{j,k}(\Gamma_2)$ une forme propre pour l'action de ces opérateurs ($f$ non nulle). Le sous-corps de $\mathbb{C}$ engendré par les valeurs propres de l'action sur $f$ des opérateurs en question est alors un corps de nombres, que l'on notera $\mathbb{Q}(f)$. De plus, on associe suivant Andrianov un produit Eulérien $\zeta_f(s) = \prod_p Q_p(p^{-s})^{-1}$, absolument convergent pour $\mathrm{Re}(s)$ assez grand, où pour tout premier $p$ l'élément $Q_p(X) \in \mathbb{Q}(f)[X]$ est un polynôme de degré $4$ de la forme 
$$Q_p(X)  = 1 - a_p(f)  X + b_p(f) X^2 - p^{2k+j-3} a_p(f) X^3 + p^{4k+2j-6} X^4,$$
les coefficients $a_p(f)$ et $b_p(f)$ étant les valeurs propres de certains opérateurs de Hecke explicites.\\

Le théorème suivant est essentiellement dû à Weissauer \cite{weissaur} (voir également la discussion du chapitre VIII.2.15 de \cite{chenevier}).

\begin{theo}\label{chenevier1} 
Soit $f \in \mathrm {S}_{j,k}(\Gamma_2)$ une forme propre pour l'action des opérateurs de Hecke. Pour tout nombre premier $\ell$ et toute place $\lambda$ de $\mathbb{Q}(f)$ au dessus de $\ell$, il existe un $\overline{\mathbb{Q}(f)_\lambda}$-espace vectoriel $V$ de dimension $4$, muni d'une forme bilinéaire alternée non dégénérée $b$, ainsi qu'une représentation semi-simple et continue $$r_f : \mathrm{Gal}(\overline{\mathbb{Q}}/\mathbb{Q}) \rightarrow \mathrm{GSp}(b)$$
non ramifiée hors de $\ell$ et vérifiant les propriétés suivantes : \\
\begin{itemize}
\item[(i)] pour tout nombre premier $p \neq \ell$, $\det(1 - X r_f(\mathrm {Frob}_p)) = Q_p(X)$,
\item[(ii)] si $\eta : \mathrm{GSp}(b) \rightarrow \overline{\mathbb{Q}(f)_\lambda}^\times$ est le facteur de similitude associé à $b$, alors on a $\eta \circ r_f = \chi_\ell^{j+2k-3}$, $\chi_\ell$ désignant le caractère cyclotomique $\ell$-adique. 
\end{itemize}
\end{theo}
%
%

On rappelle que Tsushima a déterminé dans \cite{tsuchima} une formule explicite pour la dimension de l'espace $\mathrm{S}_{j,k}(\Gamma_2)$ pour $k\geq 5$. Cette dimension vaut $1$ pour les quatre couples $(j,k)$ dans $\{(6,8), (8,8), (4,10), (12,6)\}$ déjà mentionnés. Si $f$ est un élément non nul de $\mathrm{S}_{j,k}(\Gamma_2)$ pour l'une de ces quatre valeurs, il est en particulier automatiquement vecteur propre de tous les opérateurs de Hecke. De plus, on a alors  $\mathbb{Q}(f)=\mathbb{Q}$ et les polynômes $\mathrm{Q}_p$ associés à $f$ sont dans $\mathbb{Z}[X]$ d'après la proposition IX.1.9 de \cite{chenevier}. On notera $r_{j,k,\ell}$ la représentation $\ell$-adique $r_f$ associée à cette forme $f$ et à $\ell$ par le théorème ci-dessus. C'est une représentation irréductible d'après un résultat de Calegari et Gee \cite{calegari} et le \S X.1.3 de \cite{chenevier}.

On notera également $\overline{r_{j,k,\ell}}$ la $\overline{\mathbb{F}_\ell}$-représentation résiduelle associée à $r_{j,k,\ell}$ par un procédé standard (voir par exemple la discussion à la fin du chapitre X.1 de \cite{chenevier}). Disons brièvement que la représentation $r_{j,k,\ell}$ est définie sur une extension finie $E$ de $\mathbb{Q}_\ell$ et admet un $\mathcal{O}_E$-réseau stable $L$. On définit $\overline{r_{j,k,\ell}}$  comme étant la semi-simplifiée de la représentation $L/(\pi_E L) \otimes_{\mathcal{O}_E/\pi_E} \overline{\mathbb{F}_\ell}$, où $\pi_E$ désigne une uniformisante de $\mathcal{O}_E$. Elle est en fait définie sur le corps fini $\mathbb{F}_\ell$ et peut être choisie à valeurs dans le groupe fini $\mathrm{ GSp}_4(\mathbb{F}_\ell)$ (voir {\it loc. cit.}). Elle est continue, non-ramifiée hors de $\ell$, et vérifie des conditions similaires aux (i) et (ii) de l'énoncé  ci-dessus, dans lesquelles $Q_p$ et $\chi_\ell$ sont remplacés par leurs réductions modulo $\ell$.

\begin{theo}\label{irreductible}
Soit $\ell$ un nombre premier. Alors la représentation  $\overline{r_{j,k,\ell}}$ est absolument irréductible si, et seulement si, on a $\ell\geq 7$ et si l'on est dans l'un des cas suivants:
$$(j,k)=(6,8) \quad \text{et} \quad \ell\neq11,17, \qquad (j,k)=(8,8) \quad \text{et}\quad  \ell\neq 13, 17,23, $$
$$(j,k)=(12,6) \quad\text{et} \quad \ell\neq 7,13,19, \quad (j,k)=(4,10) \quad\text{et}\quad \ell\neq 11,19,41.$$

\end{theo}
Une partie des résultats de ce théorème est déjà connue d'après le théorème X.4.4 et la prop. X.4.10 de \cite{chenevier} pour des petites valeurs de $\ell$ ($\ell<j+2k-3$) et pour $(8,8,23)$ et $(4,10,41)$. Pour $\ell$ assez grand, nous reprenons la méthode de Dieulefait dans \cite{dieulefait}, qui démontre que le théorème vaut pour tout $\ell$ assez grand, et ce avec un minorant effectif. Un examen de cette méthode, ainsi que la connaissance des polynômes caractéristiques de $r_{j,k,\ell}(\mathrm{Frob}_p)$ pour $p=2,3$ (déterminés par Faber et van der Geer dans \cite[\S 3]{123}, voir aussi \cite[tables C.3 et C.4]{chenevier} et \cite{megarbane}), nous permet de conclure pour tout triplet 
non couvert par les résultats de Chenevier et Lannes \cite{chenevier}.
	
Le théorème principal que l'on démontre dans cet article est le suivant:
\begin{theo}\label{images}
Soit $\ell$ un nombre premier. On suppose que $\overline{r_{j,k,\ell}}$ est absolument irréductible et que $(j,k,\ell)\neq(6,8,13),(4,10,17)$. Alors l'image de la représentation $r_{j,k,\ell}$ est conjuguée à $$\{M\in \mathrm{GSp}_{4}(\mathbb{Z}_{\ell})| \eta(M) \in ({\mathbb{Z}_{\ell}}^{\times})^{j+2k-3} \}.$$ 
\end{theo}
La démonstration de ce théorème procède de la manière suivante. Tout d'abord, on se ramène (par un argument dû à Serre) à montrer que l'image de $\overline{r_{j,k,\ell}}$ contient $\mathrm{Sp}_4(\mathbb{F}_\ell)$. Si ce n'était pas le cas, cette image serait incluse dans l'un des sous-groupes maximaux de $\mathrm {GSp}_4(\mathbb{F}_\ell)$ classifiés par Mitchell \cite{mitchell} (voir le \S 4). Nous élaborons pour cela la méthode de Dieulefait \cite{dieulefait} et dégageons des critères effectifs permettant d'exclure chacun des cas possibles. L'application de ces critères nécessite de connaître le polynôme caractéristique de $r_{j,k,\ell}(\mathrm{Frob}_p)$ pour $p \leq 13$, qui est connu d'après \cite[table C.3, C.4]{chenevier}. 
Mentionnons que le cas $(6,8,13)$ non traité par le théorème \ref{images} a déjà été mis en évidence dans les travaux de Chenevier et Lannes (voir \cite{chenevier} chapitre X. remarque 4.11 où une conjecture est faite). Le fait que le cas $(4,10,17)$ soit particulier semble nouveau. Les résultats des calculs laissent penser que l'image stabilise un plan non dégénéré dans $\Lambda^2V$. Il serait intéressant d'avoir une démonstration de ces résultats.

L'article s'organise comme suit: dans le premier paragraphe, on donne des rappels  sur les caractères fondamentaux ainsi que la description de la restriction de $\overline{r_{j,k,\ell}}$ à un sous-groupe d'inertie en $\ell$. 
Dans le deuxième paragraphe,  on démontre l'irréductibilité des représentations $\overline{r_{j,k,\ell}}$. Dans le troisième paragraphe, on énonce la liste des sous-groupes maximaux de $\mathrm{PGSp}(4,\mathbb{F}_{\ell})$. Enfin, le dernier paragraphe  démontre le théorème \ref{images}.\\

Les méthodes de cet article permettent sans doute d'obtenir des résultats analogues concernant les représentations galoisiennes $\ell$-adiques associées à des formes propres dans $\mathrm{S}_{j,k}(\Gamma_2)$ pour d'autres couples $(j,k)$ que ceux considérés ici, du moins lorsque la dimension de cet espace est $1$ et que $\ell$ est assez grand. Cependant, si l'on désire des résultats complets (incluant toutes les valeurs de $\ell$, notamment les petites) il faut pouvoir disposer de diverses congruences, comme celles "à la Harder" démontrées dans \cite[Thm 4.4]{chenevier}, ainsi que de tables de polynômes caractéristiques de Frobenius (il n'est pas certain que celles de Faber et van der Geer suffisent). Mégarbané nous a communiqué des résultats dans ce sens pour les formes vérifiant $j+2k-3 = 25$. 

\subsubsection*{Remerciements}Je tiens à remercier Gaëtan Chenevier pour sa relecture détaillée, ses remarques judicieuses et son encadrement pour l'écriture de cet article. Je remercie également Quentin Guignard et Thomas Mégarbané pour les corrections apportées à ce document.

\section{Action de l'inertie}
\subsubsection*{Caractères fondamentaux}
Une bonne référence pour cette partie est le premier paragraphe de l'article de Serre \cite{serre71}. On se donne  un nombre premier $\ell$, et on fixe une clôture algébrique $\overline{\mathbb{Q}_{\ell}}$ du corps des nombres $\ell$-adiques $\mathbb{Q}_{\ell}$ et $\overline{\mathbb{F}_{\ell}}$ du corps fini $\mathbb{F}_{\ell}$. 
On fixe un isomorphisme $\sigma : \mathbb{Z}_{\ell}^{nr}/\ell\mathbb{Z}_{\ell}^{nr} \rightarrow \overline{\mathbb{F}_{\ell}}$, où $\mathbb{Z}_{\ell}^{nr}$ désigne l'anneau des entiers de l'extension maximale non-ramifiée de $\mathbb{Q}_{\ell}$ dans $\overline{\mathbb{Q}_{\ell}}$ et que l'on note $\mathbb{Q}_{\ell}^{nr}$. Le groupe d'inertie en $\ell$ est  $\mathrm{I}_{\ell}=\mathrm{Gal}(\overline{\mathbb{Q}_{\ell}} /\mathbb{Q}_{\ell}^{nr})$.

Si $N>0$, on notera $x_N$ une racine $(\ell^N-1)$-ième de $\ell$  dans $\overline{\mathbb{Q}_{\ell}}$. On définit alors le caractère fondamental de niveau $N$ par la composée 
$$\psi_{N}: \mathrm{I}_{\ell} \rightarrow \mu_{\ell^N-1} \rightarrow  {\overline{\mathbb{F}_{\ell}}}^{\times} $$ donnée par $$\tau \mapsto \frac{\tau(x_N)}{x_N} \mapsto \sigma\left(\frac{\tau(x_N)}{x_N}\right).$$ 
Ceci est indépendant du choix de $x_N$ car $\mathrm{I}_\ell$ agit trivialement sur les racines $(\ell^N-1)$-ième de l'unité dans $\overline{\mathbb{Q}_{\ell}}$.  
L'image de $\psi_{N}$ est alors le groupe multiplicatif de l'unique sous-corps de $\overline{\mathbb{F}_{\ell}}$  de cardinal $\ell^N$. On peut  vérifier que $\psi_1$ est la réduction modulo $\ell$ du caractère cyclotomique  $\chi_{\ell}$ restreint à $ \mathrm{I}_{\ell}$. D'autre part, si $M$ divise $N$ alors $\psi_M={\psi_N}^{\frac{\ell^N-1}{\ell^M-1}}$.\\

Soit $\psi : {\rm I}_\ell \rightarrow \overline{\mathbb{F}_\ell}^\times$ un caractère continu. Il existe un entier $N\geq 1$ et un entier $0 \leq a < \ell^N$ tel que $\psi = \psi_N^a$. Notons $a'$ l'unique élément de $\{0,\dots,\ell-1\}$ tel que $a \equiv a' \bmod \ell$. Observons que $a'$ ne dépend pas du choix de l'écriture de $\psi$ sous la forme $\psi = \psi_N^a$. En effet, supposons que l'on ait l'égalité $\psi_N^a = \psi_M^b$. On constate que l'on a $\psi_N^a = \psi_{NM}^{a\frac{\ell^{NM}-1}{\ell^N-1}}$, $0 \leq a\frac{\ell^{NM}-1}{\ell^N-1}<\ell^{NM}$ et la congruence $a \equiv a\frac{\ell^{NM}-1}{\ell^N-1} \bmod \ell$. En procédant de manière similaire pour $b$, on peut donc supposer que $N=M$, ce qui entraîne $a=b$. Nous appellerons ${\it poids}$ de $\psi$ l'élément $a' \in \{0,\dots,\ell-1\}$ bien défini par ces observations. On le note $k(\psi)$.

Soit $V$ une $\overline{\mathbb{F}_\ell}$-représentation continue de dimension finie $d$ du groupe $\mathrm {Gal}(\overline{\mathbb{Q}}/\mathbb{Q})$. Soit $U$ la semi-simplifiée de la restriction de $V$ à $\mathrm{I}_\ell$. C'est une somme directe de $d$ caractères. On appelle {\it poids d'inertie modérée} de $V$ la collection des poids de ces caractères. Il s'agit donc d'une famille de $d$ entiers (non nécessairement distincts) de $\{0,\dots,\ell-1\}$.

\begin{lemme}\label{inertielambda}
Soit $V$ une $\overline{\mathbb{F}_\ell}$-représentation continue de dimension finie $d$ du groupe $\mathrm{Gal}(\overline{\mathbb{Q}}/\mathbb{Q})$. On suppose que $V$ a pour poids d'inertie modérée les entiers $k_i$ avec $i=1,\cdots,d$. On suppose de plus que $k_i+k_j <\ell$ pour $i<j$. Alors les poids d'inertie modérée de $\Lambda^2 V$ sont les $k_i+k_j$ avec $i<j$. 
\end{lemme}
La démonstration de ce lemme est laissée au lecteur.

\subsubsection*{Action de l'inertie}
Soit $\ell$ un nombre premier. On a défini dans l'introduction une représentation galoisienne $\ell$-adique $\overline{r_{j,k,\ell}}$ pour $(j,k)\in \{(6,8),(8,8),(4,10), (12,6)\}$. La proposition suivante décrit l'action du groupe d'inertie en $\ell$.
On fixe un plongement $\overline{\mathbb{Q}} \longrightarrow \overline{\mathbb{Q}_\ell}$, ce qui nous fournit un morphisme de groupes $\mathrm{Gal}(\overline{\mathbb{Q}}_\ell/\mathbb{Q}_\ell) \rightarrow \mathrm{Gal}(\overline{\mathbb{Q}}/\mathbb{Q})$. D'après les rappels ci-dessus concernant la définition de $r_{j,k,\ell}$, et d'après \cite{chenevierharris}, la représentation $r_{j,k,\ell}$ est une $\overline{\mathbb{Q}}_\ell$-représentation cristalline de dimension $4$, de poids de Hodge-Tate $0, k-2, j+k-1$, et $j+2k-3$. 
Le théorème de Fontaine-Laffaille \cite{fontainelaffaille} admet donc la conséquence suivante. 
\begin{proposition}\label{description}
 On suppose $\ell > j+2k-2$. Les poids d'inertie modérée de la restriction de $\overline{r_{j,k,\ell}}$ à $\mathrm{Gal}(\overline{\mathbb{Q}_\ell}/\mathbb{Q}_\ell)$, sont $0, k-2, j+k-1$ et $j+2k-3$.
\end{proposition}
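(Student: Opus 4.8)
Le plan est d'appliquer directement la théorie de Fontaine-Laffaille \cite{fontainelaffaille}, dont la proposition est annoncée comme une conséquence. Rappelons l'énoncé utile : si $W$ est une $\overline{\mathbb{Q}}_\ell$-représentation cristalline du groupe $\mathrm{Gal}(\overline{\mathbb{Q}_\ell}/\mathbb{Q}_\ell)$ dont les poids de Hodge-Tate $h_1,\dots,h_d$ sont contenus dans un intervalle de longueur au plus $\ell-2$ (c'est-à-dire $\max_i h_i - \min_i h_i \leq \ell-2$), alors $W$ admet un réseau fortement divisible dont la réduction modulo $\ell$ a, après semi-simplification et restriction à $\mathrm{I}_\ell$, des poids d'inertie modérée égaux aux $h_i \bmod \ell$. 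C'est exactement la comparaison entre poids de Hodge-Tate et poids d'inertie modérée fournie par le foncteur de Fontaine-Laffaille dans son domaine de pleine fidélité.

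Il s'agit ensuite de vérifier la condition numérique pour $r_{j,k,\ell}$. D'après les rappels précédant l'énoncé, $r_{j,k,\ell}$ est cristalline de poids de Hodge-Tate $0, k-2, j+k-1, j+2k-3$ ; le plus grand est $j+2k-3$ et le plus petit $0$, de sorte que l'amplitude vaut $j+2k-3$. Or l'hypothèse $\ell > j+2k-2$ équivaut, pour $\ell$ entier, à $\ell \geq j+2k-1$, soit encore $j+2k-3 \leq \ell-2$. Les quatre poids sont donc contenus dans l'intervalle $[0,\ell-2]$, qui est précisément de longueur $\ell-2$, et l'hypothèse de Fontaine-Laffaille est satisfaite.

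Enfin je conclus : comme chaque poids $h_i$ appartient déjà à $\{0,\dots,\ell-2\}$, sa classe modulo $\ell$ est $h_i$ lui-même, qui est alors l'unique représentant dans $\{0,\dots,\ell-1\}$ du poids $k(\psi)$ du caractère correspondant au sens de la définition donnée plus haut. Les poids d'inertie modérée de la restriction de $\overline{r_{j,k,\ell}}$ à $\mathrm{Gal}(\overline{\mathbb{Q}_\ell}/\mathbb{Q}_\ell)$ sont donc $0, k-2, j+k-1, j+2k-3$, comme annoncé. On notera que $\overline{r_{j,k,\ell}}$ est, par définition (voir l'introduction), une semi-simplifiée, et que les poids d'inertie modérée ne dépendent que de celle-ci ; l'indépendance vis-à-vis du réseau stable choisi (Brauer-Nesbitt) rend le tout cohérent avec l'objet produit par Fontaine-Laffaille.

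La principale difficulté me semble essentiellement bibliographique : il faut extraire de \cite{fontainelaffaille} la forme exacte du dictionnaire entre poids de Hodge-Tate et poids d'inertie modérée, et s'assurer que c'est bien la longueur $\ell-2$ (et non $\ell-1$) qui garantit la pleine fidélité et donc la validité de la comparaison. Une fois ce point acquis, le reste n'est qu'une vérification élémentaire d'inégalités, ce qui explique sans doute pourquoi les auteurs laissent cet argument implicite.
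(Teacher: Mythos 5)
Votre démonstration suit exactement la même voie que l'article, qui énonce cette proposition comme conséquence directe du théorème de Fontaine--Laffaille appliqué à la représentation cristalline $r_{j,k,\ell}$ de poids de Hodge--Tate $0, k-2, j+k-1, j+2k-3$ (d'après \cite{chenevierharris}), l'hypothèse $\ell > j+2k-2$ garantissant précisément que ces poids tombent dans l'intervalle $[0,\ell-2]$ de la théorie. Vous ne faites qu'expliciter la vérification numérique et la réduction modulo $\ell$ que l'article laisse implicites, ce qui est correct.
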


\section{Étude de l'irréductibilité}\label{chapitre3}
On va étudier l'irréductibilité des $\overline{r_{j,k,l}}$ pour $(j,k)\in \{(6,8), (8,8), (4,10),$  $(12,6)\}$. Le but sera de démontrer le théorème \ref{irreductible}.
 %
Soient $p$ un nombre premier et $(j,k)$ l'un des quatre couples définis dans l'introduction. Si $r\geq 1$ est un entier, on note $\tau_{j,k}(p^r)$ la trace de $\mathrm{Frob}_p^r$ dans la représentation $r_{j,k,\ell}$. D'après le théorème 1.1 et \cite[Prop. IX.1.9]{chenevier}, c'est un nombre entier indépendant du choix du nombre premier $\ell \neq p$. Le polynôme caractéristique de $r_{j,k,\ell}(\mathrm{Frob}_p)$ est alors de la forme : 
\begin{eqnarray*} &P_{j,k,p}(X)=X^{4}Q_{p}(\frac{1}{X})\\ &=X^{4}-\tau_{j,k}(p)X^3+\frac{{\tau_{j,k}(p)}^{2}-\tau_{j,k}(p^2)}{2}X^2-\tau_{j,k}(p)p^{2k+j-3}X+p^{4k+2j-6}. \end{eqnarray*}

Le calcul des $\tau_{j,k}(p^r)$ pour des petites valeurs de $p$ et $r$ a été effectué par Faber et van der Geer (voir le chapitre 25 dans \cite{123}), et de manière différente par Chenevier et Lannes dans \cite{chenevier}. Nous n'utiliserons ces calculs explicites que pour $p=2, 3, 5,7,13$.

La représentation $\overline{r_{j,k,\ell}}$ vérifie $\overline{r_{j,k,\ell}}^{*}\simeq\overline{r_{j,k,\ell}}\otimes\chi_{l}^{-(j+2k-3)}$. On en déduit que l'ensemble fini $\mathrm{J}(\overline{r_{j,k,\ell}})$ des facteurs de Jordan-H\"older  de $\overline{r_{j,k,\ell}}$ est stable par l'opération $W\mapsto W^{*}\otimes\chi_{l}^{j+2k-3}$. Observons d'abord que $\mathrm{J}(\overline{r_{j,k,\ell}})$ ne peut pas contenir des éléments de dimension $3$. En effet, dans le cas contraire, soit  $W$ un tel élément, alors $W$ est irréductible et la restriction $b_{W}$ de la forme bilinéaire alternée $b$ à $W$ est dégénérée. Son noyau est alors  de dimension $1$ ou $3$. Mais comme $b$ est d'indice $2$, le noyau de $b_{W}$ est de dimension $1$ et c'est une sous-représentation de $W$ ce qui n'est pas possible.\\

L'objectif est de montrer que $\mathrm{J}(\overline{r_{j,k,\ell}})$ est réduit à un singleton sauf dans les cas exclus par le théorème \ref{irreductible}. Si $\ell \leq j+2k-3$, cela résulte de la proposition 4.10 du chapitre X de \cite{chenevier}. Cependant, les critères d'irréductibilité {\it ad hoc} employés {\it loc. cit.}, qui donnent des conditions suffisantes mais non nécessaires a priori, deviennent fastidieux à vérifier, notamment le calcul de l'entier $\kappa$,  quand $\ell$ grandit. De plus, ils ne permettent au mieux que de traiter un nombre fini de valeurs de $\ell$, pour un $(j,k)$ donné. L'ingrédient nouveau utilisé ci-dessous est l'action de l'inertie en $\ell$.\\

On suppose dorénavant $\ell>j+2k-3$. Si $\mathrm{J}(\overline{r_{j,k,\ell}})$ n'est pas un singleton, alors on n'est dans l'une des situations suivantes:

\subsubsection*{Existence d'une droite stable}

Comme la représentation $\overline{r_{j,k,\ell}}$ est non ramifiée hors de $\ell$, l'action de $\mathrm{Gal}(\overline{\mathbb{Q}}/\mathbb{Q})$ sur une droite   stable $D$ est donnée par une puissance entière de $\chi_\ell$ (Kronecker-Weber). Comme de plus on a $\ell > j+2k-2$, la proposition \ref{description} montre que c'est $\chi_\ell^i$ avec $i=0, k-2, j+k-1, j+2k-3$. 
Il s'ensuit que $\chi_{\ell}^{i}(\mathrm{Frob}_{p}) $  pour $\ell$ premier différent de $p$,  est racine du polynôme $P_{j,k,p}$ modulo $\ell$. D'autre part, $D^{*}\otimes\chi_\ell^{j+2k-3}$ est également une droite stable, donc $\chi_{\ell}^{j+2k-3-i}(\mathrm{Frob}_{p}) $  est également racine du polynôme $P_{j,k,p}$ modulo $\ell$.  On a donc, selon la valeur de $i$, que $1$ ou $\chi_{\ell}^{j+k-1}(\mathrm{Frob}_{p})$ est racine du polynôme $P_{j,k,p}$ modulo $\ell$, pour $\ell$ nombre premier différent de $p$. 
\newline Dans le  premier cas, on obtient la condition que $\ell$  divise le nombre:
 $$ A_{j,k}(p)=\frac{{\tau_{j,k}(p)}^{2}-\tau_{j,k}(p^2)}{2}-\tau_{j,k}(p)(p^{2k+j-3}+1)+p^{4k+2j-6}+1.$$
\newline Dans le second cas, on doit avoir $\ell$ qui divise: $$B_{j,k}(p)=p^{2k-4}(1+p^{2j+2})-\tau_{j,k}(p)p^{k-2}(1+p^{j+1})+\frac{{\tau_{j,k}(p)}^{2}-\tau_{j,k}(p^2)}{2}.$$
\subsubsection*{Existence de deux composantes de dimension $2$ reliées}
C'est le cas où la représentation se décompose en somme de deux représentations galoisiennes irréductibles:
$$\overline{r_{j,k,\ell}}\simeq \pi_{1}\oplus\pi_{2}$$ telle que $\pi_{1}^{*}\otimes \chi_{l}^{j+2k-3}\simeq \pi_{2}$. Cette relation montre que les ensembles $P_1$ et $P_2$, constitués respectivement des poids de l'inertie modérée de $\pi_1$ et $\pi_2$, vérifient $P_2 = -P_1 + j+2k-3$. Comme  $\ell > j+2k-2$,  $0$ est dans $P_1$ (quitte à échanger $\pi_{1}$ et $\pi_2$), auquel cas $j+2k-3$ est dans $P_2$, et donc $P_1=\{0,j+k-1\}$ ou $P_1=\{0,k-2\}$ (noter que les quatre poids sont distincts par hypothèses). Deux cas sont alors à envisager.

Dans le premier cas, le polynôme $P_{j,k,p}$  se factorise modulo $\ell$ sous la forme : 
$$P_{j,k,p}(X)=\left(X^2-AX+p^{3k+j-5}\right)\left(X^2-\frac{A}{p^{k-2}}X	+p^{j+k-1}\right).$$
En identifiant et en éliminant $A$, on voit que $\ell$ doit diviser
$$C_{j,k}(p)=\left(\frac{{\tau_{j,k}(p)}^{2}-\tau_{j,k}(p^2)}{2}-p^{j+k-1}-p^{j+3k-5}\right){\left(1+p^{k-2}\right)}^{2}-p^{k-2}{\tau_{j,k}(p)}^{2}.$$

Dans le second cas, on doit avoir une factorisation de la forme:
$$P_{j,k,p}(X)=\left(X^2-AX+p^{3k+2j-4}\right)\left(X^2-\frac{A}{p^{j+k-1}}X+p^{k-2}\right).$$ 
En identifiant et en éliminant $A$, on voit que $\ell$ doit diviser:
$$D_{j,k}(p)=\left(\frac{{\tau_{j,k}(p)}^{2}-\tau_{j,k}(p^2)}{2}-p^{k-2}-p^{2j+3k-4}\right){(1+p^{j+k-1})}^{2}-p^{j+k-1}{\tau_{j,k}(p)}^{2}.$$

\subsubsection*{Existence de deux composantes de dimension $2$ non reliées}
Dans ce dernier cas, $\overline{r_{j,k,\ell}}$ est la  somme de deux représentations irréductibles ${\overline{r_{j,k,l}}}\simeq \pi_{1}\oplus\pi_{2}$ avec $\pi_{1}^{*}\otimes \chi_\ell^{j+2k-3}\simeq \pi_{1}$ et $\pi_{2}^{*}\otimes \chi_\ell^{j+2k-3}\simeq \pi_{2}$. 

Comme $\ell > j+2k-2$, la proposition \ref{description} s'applique. 
Quitte à échanger les rôles de $\pi_1$ et $\pi_2$, on peut supposer que $0$ est un poids de l'inertie modérée de $\pi_1$, de sorte que ses deux poids sont $0$ et $j+2k-3$, et ceux de $\pi_2$ sont $k-2$ et $j+k-1$.

On utilise alors la résolution par Wintenberger et Khare de la conjecture de Serre (voir \cite{khare} et \cite{serre1987}). En effet, considérons la représentation $\rho:=\pi_2 \otimes \chi_\ell^{2-k}$. Elle est irréductible (comme représentation du groupe de Galois absolu de $\mathbb{Q}$). Elle est impaire car son déterminant est $\chi_\ell^{j+2k-3-2k+4}=\chi_\ell^{j+1}$, et $j+1$ est impair. Comme elle est non ramifiée hors de $\ell$, le conducteur $\mathrm{N}(\rho)$ de $\rho$ est $1$. Les poids de l'inertie modérée de $\rho$ sont $0< j+1 < \ell$. Si la restriction de $\rho$ \`a $\mathrm{I}_\ell$ est irréductible, alors le poids de Serre $k(\rho)$ vaut $j+2$ par la recette de Serre. Dans les autres cas, elle est réductible de semi-simplifiée $1 \oplus \chi_\ell^{j+1}$. Mais par construction de $\overline{r_{j,k,\ell}}$, la restriction à $\mathrm{Gal}(\overline{\mathbb{Q}_\ell}/\mathbb{Q}_\ell)$ de la représentation $\pi_2$ est du type de celles étudiées par Fontaine et Laffaille dans \cite{fontainelaffaille}. On a $k-2<j+k-1$, d'après ces auteurs (voir aussi le théorème 2 de \cite{wach}), c'est donc nécessairement une extension de $\chi_\ell^{k-2}$ par $\chi_\ell^{j+k-1}$, comme on le voit sur le module filtré associé. Ainsi, $\rho$ est une extension de $1$ par $\chi_\ell^{j+1}$, de sorte que la recette de Serre donne encore $k(\rho)=j+2$ car on a $1<j+1<\ell-1$. D'après Khare, $\rho$ est donc la représentation résiduelle associée à une forme modulaire parabolique pour $\mathrm{SL}_2(\mathbb{Z})$ de poids $j+2$. Mais pour $j=4,6,8,12$, l'espace des formes modulaires paraboliques de poids $j+2$ pour $\mathrm{SL}_2(\mathbb{Z})$ est nul, une contradiction.

\subsubsection*{Résultats des calculs explicites}
 En utilisant les tables données dans \cite{chenevier}, on calcule $A_{j,k}(p)$, $B_{j,k}(p)$, $C_{j,k}(p)$ et $D_{j,k}(p)$ pour certains nombres premiers $p$. On retiendra le fait que les conditions de réductibilité précédemment étudiées imposent à $\ell$ de diviser les éléments de chaque ligne des tables $2$, $3$, $4$ et $5$, dès que  $\ell$ est supérieur à $j+2k-2$. On remarquera par exemple que $23$ divise les  éléments de la troisième ligne pour $(j,k)=(8,8)$ et $41$ divise les éléments de la deuxième ligne pour $(j,k)=(4,10)$. Cela permet donc de compléter la démonstration du théorème \ref{irreductible}.\\
 
Le résultat suivant résulte sans doute (pour tout $\ell$) de la construction des $r_{j,k;\ell}$ par Weissauer \cite{weissaur}. Nous en proposons ci-dessous une démonstration alternative.  
\begin{corollaire} \label{corollaireirréductible}
Soient $(j,k)$ l'un des quatre couples ci-dessus et $\ell$ un nombre premier $\geq 7$. Alors la représentation $r_{j,k,\ell}$ est définie sur $\mathbb{Q}_\ell$. En particulier, son image est conjuguée à un sous-groupe de $\mathrm{GSp}_4(\mathbb{Z}_\ell)$.
\end{corollaire}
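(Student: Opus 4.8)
The plan is to prove that $\rho := r_{j,k,\ell}$, a priori defined only over $\overline{\mathbb{Q}_\ell}$, has Schur index $1$ over $\mathbb{Q}_\ell$. Once $\rho$ is realized over $\mathbb{Q}_\ell$, its image is a compact subgroup of $\mathrm{GSp}_4(\mathbb{Q}_\ell)$, hence preserves both a lattice and the symplectic form, so after conjugation it lands in $\mathrm{GSp}_4(\mathbb{Z}_\ell)$; this gives the ``in particular''. So the whole content is the descent to $\mathbb{Q}_\ell$.

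First I would record that $\rho$ is absolutely irreducible (it is irreducible over the algebraically closed field $\overline{\mathbb{Q}_\ell}$, by Calegari--Gee) and that its character is $\mathbb{Q}_\ell$-valued: indeed $\det(1-X\rho(\mathrm{Frob}_p)) = Q_p(X) \in \mathbb{Z}[X]$ for $p \neq \ell$, the Frobenius classes are dense by Chebotarev, and the coefficients of characteristic polynomials vary continuously, so all traces lie in $\mathbb{Q}_\ell$. Consequently the $\mathbb{Q}_\ell$-subalgebra $B := \mathbb{Q}_\ell[\rho(\mathrm{Gal}(\overline{\mathbb{Q}}/\mathbb{Q}))] \subseteq \mathrm{End}_{\overline{\mathbb{Q}_\ell}}(V)$ is central simple of degree $4$, and the standard criterion says that $\rho$ descends to $\mathbb{Q}_\ell$ if and only if $B$ is split, i.e. $B \cong M_4(\mathbb{Q}_\ell)$.

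The symplectic structure then bounds the index. From $\rho \cong \rho^\vee \otimes \chi_\ell^{j+2k-3}$, with similitude character valued in $\mathbb{Z}_\ell^\times \subseteq \mathbb{Q}_\ell^\times$, the adjoint involution of $b$ induces a $\mathbb{Q}_\ell$-linear involution of the first kind on $B$; by Albert's theorem $[B]$ is then $2$-torsion in $\mathrm{Br}(\mathbb{Q}_\ell)$, and since exponent equals index over a local field, the Schur index $m$ equals $1$ or $2$. It remains to exclude $m=2$, i.e. $B \cong M_2(D)$ with $D$ the quaternion division algebra over $\mathbb{Q}_\ell$. For this I would use that every primitive idempotent of $M_2(D)$ has $\overline{\mathbb{Q}_\ell}$-rank $2$: hence if $m=2$, then for every $g$ and every eigenvalue $\alpha \in \mathbb{Q}_\ell$ of $\rho(g)$, the spectral projector onto the generalized $\alpha$-eigenspace lies in $\mathbb{Q}_\ell[\rho(g)] \subseteq B$ and has even rank, so $\alpha$ occurs with even multiplicity. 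It therefore suffices to exhibit a single $g$ whose image $\rho(g)$ has a $\mathbb{Q}_\ell$-rational eigenvalue of odd multiplicity.

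The hard part is producing such a $g$ uniformly in $\ell \geq 7$. The natural candidates are the $\mathrm{Frob}_p$ for small good $p$, whose characteristic polynomials $P_{j,k,p}(X)\in\mathbb{Z}[X]$ are tabulated for $p \leq 13$; by Hensel's lemma it is enough that $P_{j,k,p} \bmod \ell$ have a simple root in $\mathbb{F}_\ell$. The genuine obstruction is that the Frobenius eigenvalues are Weil numbers of the odd weight $j+2k-3$, hence never lie in $\mathbb{Q}$ itself, so no single eigenvalue can serve for all $\ell$, and whether $P_{j,k,p}$ acquires a simple root in $\mathbb{Q}_\ell$ depends on $\ell$. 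I therefore expect the core of the argument to be a finite verification, using the tabulated polynomials for $p \in \{2,3,5,7,13\}$, that for each prime $\ell \geq 7$ at least one $P_{j,k,p}$ has a simple root modulo $\ell$; this forces $m=1$, hence the descent to $\mathbb{Q}_\ell$ and the claimed integrality.
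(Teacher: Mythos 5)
Your first three steps are correct, and they run parallel to the paper's own proof: the paper likewise forms the $\mathbb{Q}_\ell$-algebra $R$ generated by the image inside $\mathrm{End}(V)$, notes via Wedderburn that it is central simple of rank $16$, and must exclude the two non-split possibilities. (To rule out the degree-$4$ division algebra, the paper argues with complex conjugation, whose eigenvalues $1,1,-1,-1$ produce a nontrivial idempotent, rather than with your adjoint-involution/Albert argument; both are valid, and yours is a reasonable alternative.) Your even-rank idempotent criterion for the case $B \cong \mathrm{M}_2(D)$ is also correct: a $\mathbb{Q}_\ell$-rational eigenvalue of odd multiplicity would indeed give an odd-rank idempotent in $B$, which $\mathrm{M}_2(D)$ does not contain. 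The gap lies entirely in the last step, which you yourself flag as the hard part and leave as an expectation: the assertion that for every prime $\ell \geq 7$ at least one tabulated polynomial $P_{j,k,p}$, $p \in \{2,3,5,7,13\}$, has a simple root modulo $\ell$. This is not a finite verification --- it is one condition for each of the infinitely many primes $\ell$ --- and, worse, it fails for a set of primes $\ell$ of positive density, so no computation with a fixed finite list of Frobenius polynomials can close the argument.

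To see why it fails, note that whether $P_{j,k,p}$ has a root modulo $\ell$ is a Chebotarev condition on $\ell$ in the splitting field of $P_{j,k,p}$. These quartics have no rational root (their roots are Weil numbers of odd weight $w = j+2k-3$), and complex conjugation acts on the roots by $\alpha \mapsto p^{w}/\alpha$ without fixed points (a fixed point would force $X^2 - p^{w}$ to divide $P_{j,k,p}$ over $\mathbb{Q}$, which one checks from the tables does not happen). Hence, for any $\ell$ whose Frobenius in the compositum of the splitting fields of the finitely many $P_{j,k,p}$ lies in the class of complex conjugation, every one of these polynomials factors modulo $\ell$ into quadratics and has no root in $\mathbb{F}_\ell$; such $\ell$ have positive density, so infinitely many $\ell$ escape your criterion. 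The paper instead excludes $R \cong \mathrm{M}_2(D)$ by reduction modulo $\ell$: in that case a stable-lattice argument forces $\overline{r_{j,k,\ell}}$ to be the direct sum of two $2$-dimensional representations defined over $\mathbb{F}_{\ell^2}$ that are conjugate under $x \mapsto x^{\ell}$. This contradicts the absolute irreducibility of $\overline{r_{j,k,\ell}}$ (Theorem \ref{irreductible}) for all but finitely many $\ell$, and in the finitely many reducible cases it contradicts the explicit description from Chenevier--Lannes (all Jordan--H\"older factors defined over $\mathbb{F}_\ell$ and occurring without multiplicity once $\ell \geq 7$). Any repair of your argument will need an input of this residual kind; the Frobenius-eigenvalue criterion alone cannot reach all $\ell \geq 7$.
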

\begin{proof}[Preuve du corollaire \ref{corollaireirréductible}] 	
Nous avons déjà dit que la représentation $r_{j,k,\ell}$ est irréductible, et à traces dans $\mathbb{Q}(f)_\ell=\mathbb{Q}_\ell$. Soit $R$ la $\mathbb{Q}_\ell$-algèbre engendrée par le groupe $r_{j,k,\ell}(\mathrm{Gal}(\overline{\mathbb{Q}}/\mathbb{Q})$ dans l'anneau des endomorphismes de $V$. La théorie de Wedderburn assure que $R$ est centrale et simple de rang $16$. D'après la propriété (ii) du théorème \ref{chenevier1}, les conjugaisons complexes de $\mathrm{Gal}(\overline{\mathbb{Q}}/\mathbb{Q})$ ont pour valeurs propres $1,1,-1,-1$. Ce n'est donc pas une algèbre à division et on a alors deux cas possibles : soit  $R \simeq \mathrm{M}_{4}(\mathbb{Q}_\ell)$, ce qui équivaut à dire que la représentation $r_{j,k,\ell}$ est définie sur $\mathbb{Q}_\ell$, soit  $R \simeq \mathrm{M}_2(H)$ où $H$ est l'unique algèbre de quaternions sur $\mathbb{Q}_\ell$. Dans ce deuxième cas, la représentation résiduelle $\overline{r_{j,k,\ell}}$ est alors somme directe de deux représentations de dimension $2$, disons $(V_1,r_1)$ et $(V_{2},r_2)$, toutes les deux définies sur $\mathbb{F}_{\ell^2}$, et telles que $(V_{2},r_2)\simeq (V_{1}, \phi_{\ell}\circ r_1)$, où $\phi_{\ell}: \mathrm{GL}(V_{1})\rightarrow\mathrm{GL}(V_{1})$ est l'élévation à la puissance $\ell$. Bien entendu, cette possibilité est exclue si $\overline{r_{j,k,\ell}}$ est irréductible, de sorte qu'il n'y a qu'un nombre fini de cas restant à traiter d'après le théorème précédent. Or, dans ces cas, la représentation $\overline{r_{j,k,\ell}}$ a été étudiée dans le Théorème X.4.4 de \cite{chenevier}. D'après cette référence, elle est alors réductible, tous ses facteurs sont définis sur $\mathbb{F}_\ell$, et ils sont sans multiplicité si $\ell\geq 7$. 	
\end{proof}

\section{Sous-groupes maximaux du groupe projectif symplectique}
Soient $k$ un corps de caractéristique différente de $2$, $V$ un espace vectoriel de dimension $4$ sur $k$ muni  d'une forme bilinéaire alternée non dégénérée $A$. La forme $A$ définit  un élément de $\Lambda^{2}V^{*}$.  On définit le groupe des similitudes symplectiques de $A$ comme suit:
$$\mathrm{GSp}(A):=\{g\in \mathrm{GL}(V), \exists \eta(g) \in k^{\times}, \forall x,y \in V, A(g(x),g(y))=\eta(g) A(x,y) \}.$$ 
Le scalaire $\eta(g)$ s'appelle facteur de similitude de $g$ et l'application $\eta : \mathrm{GSp}(A)\rightarrow k^{\times}$ est un morphisme de groupe, de noyau le groupe $\mathrm{Sp}(A)$, le groupe symplectique associé à $A$, et on a la suite exacte courte:
$$1\rightarrow \mathrm{Sp}(A)\rightarrow \mathrm{GSp}(A)\stackrel{\eta}{\rightarrow} k^{\times}\rightarrow 1.$$

On suppose que $V$ est muni d'une base $(e_{0},e_{1},e_{2},e_{3})$ dans laquelle la matrice de $A$ est $J=\begin{pmatrix} 0 & I_{2} \\ -I_{2} & 0 \\ \end{pmatrix}$.   
On note dans ce cas    $$\mathrm{GSp}_{4}(k):= \mathrm{GSp}(A), \quad \mathrm{Sp}_{4}(k):= \mathrm{Sp}(A)$$ et on a donc : $$\mathrm{GSp}_{4}(k)=\{M\in \mathrm{M}_{4}(k), \exists \eta(M) \in k^{\times} : M^{t}JM=\eta(M)J\}.$$
On note  $\mathrm{PGSp}(A)$  le quotient du groupe $\mathrm{GSp}(A)$ par son centre, formé des homothéties. De même, $\mathrm{PSp}(A)$ est le quotient de $\mathrm{Sp}(A)$ par $\{\pm \mathrm{Id}_{V}\}$, où $\mathrm{Id}_{V}$ désigne l'application identité de $V$. On a alors la suite exacte:
$$1\rightarrow \mathrm{PSp}(A)\rightarrow \mathrm{PGSp}(A)\stackrel{\eta}{\rightarrow} k^{\times}/{k^{\times}}^{2}\rightarrow 1.$$
On dispose d'une application bilinéaire : 
$$b: \Lambda ^{2}V \times \Lambda ^{2}V\rightarrow \Lambda ^{4}V$$
induite par l'application multilinéaire alternée \begin{eqnarray*} &V\times V\times V\times V &\rightarrow \Lambda ^{4}V\\ &(u,v,w,z) &\mapsto u\wedge v\wedge w\wedge z. \end{eqnarray*} 
L'application $b$ est symétrique car les tenseurs d'ordre $2$ commutent. Elle est non dégénérée. Ceci peut se voir simplement en écrivant sa matrice dans la base des tenseurs purs associés à la base $(e_{0},e_{1},e_{2},e_{3})$. Identifions (de manière un peu arbitraire) la droite $\Lambda^4 V$ à $k$ au moyen de la base $e_0 \wedge e_1 \wedge e_2 \wedge e_3$. Cela permet de voir $b$ comme une forme bilinéaire symétrique non dégénérée sur $\Lambda^2 V$ (on peut même vérifier qu'elle est hyperbolique, ou autrement dit, qu'elle possède un sous-espace isotrope de dimension $3$). En particulier, elle induit un isomorphisme entre $\Lambda^2 V$ et $\Lambda^2 V^\ast$. Notons $\widetilde{A}$ l'élément de $\Lambda^2 V$ qui correspond à la forme symplectique $A$ via cet isomorphisme. Par définition, on a la relation $A(v,w) e_0 \wedge e_1 \wedge e_2 \wedge e_3 = v \wedge w \wedge \widetilde{A}$ pour tous $v,w \in V$, et donc l'identité $\widetilde{A} = - e_0 \wedge e_2 - e_1 \wedge e_3$. En particulier, on constate que l'on a $b(\widetilde{A},\widetilde{A}) = -2 \neq 0$ : l'élément $\widetilde{A}$ n'est pas isotrope pour $b$ dans $\Lambda^2 V$.\\

Le groupe linéaire de $V$ agit de manière naturelle sur $\Lambda ^{2}V$. Tout élément de $\mathrm{GL}(V)$ est une similitude pour $b$, c'est-dire un élément de $\mathrm{GO}(b)$. De plus, le facteur de similitude d'un élément $g\in\mathrm{GL}(V)$ pour la forme $b$ est le déterminant de $g$. En effet, cela découle de l'égalité,  valable pour $u,v,w,z$ des éléments de $V$:  

$$(\wedge^{4}g)\,(u\wedge v\wedge w\wedge z)=\det(g)\, u\wedge v\wedge w\wedge z.$$

Le sous-groupe $\mathrm{GSp}(A)$ agit, par définition, de manière scalaire sur $A$, et par suite sur $\widetilde{A}$ aussi. Il stabilise donc  $H=\widetilde{A}^{\bot}$. La forme $b$ restreinte \`a $H$, que l'on notera $b_H$, est non-dég\'en\'er\'ee car on a vu que $\widetilde{A}$ n'est pas isotrope. On a ainsi défini un morphisme naturel $\iota : \mathrm{GSp}(A) \rightarrow \mathrm{GO}(b_H)$. En fait, le groupe $\mathrm{GSp}(A)$ agit sur $H \otimes \eta^{-1}$ par des éléments de $\mathrm{SO}(b_H)$, de sorte que l'on a construit un morphisme de groupes $\mathrm{PGSp}(A) \rightarrow \mathrm{SO}(b_H)$ ; il est bien connu que c'est un isomorphisme.\\

On suppose que $\mathrm{char}\,k \neq 3$. On rappelle que le groupe $\mathrm{GL}_2(k)$ agit sur l'espace $\mathrm{Sym}^3 k^2$ par des similitudes symplectiques, relativement \`a une forme bilinéaire alternée non dégénérée convenablement fixée sur $\mathrm {Sym}^3 k^2$ (une telle forme étant d'ailleurs unique à un scalaire près). Cela définit un homomorphisme injectif $\mathrm{PGL}_2(k) \rightarrow \mathrm{PGSp}_4(k)$, appelé {\it plongement principal}, dont la classe de conjugaison est canonique. Un tel sous-groupe de $\mathrm{PGSp}_4(k)$ peut aussi être défini comme étant le stabilisateur d'une ``cubique tordue'' de l'espace projectif $\mathrm{P}(V)$ \cite{mitchell}.\\

Mitchell a classifié dans \cite{mitchell} les sous-groupes maximaux de $\mathrm{PSp}_{4}(\mathbb{F}_{q})$. Cette classification est reprise par King dans \cite{king}, page $8$. On peut vérifier que ces diverses formulations impliquent la classification suivante pour $\mathrm{PGSp}_{4}(\mathbb{F}_{q})$. 

\begin{theo}\label{classification2} Soient $\ell$ un nombre premier $>3$, $V$ et $H$ les $\mathbb{F}_\ell$-représentations naturelles de $\mathrm{ GSp}_4(\mathbb{F}_\ell)$ décrites ci-dessus, de dimensions respectives $4$ et $5$. Soit $G$ un sous-groupe de $\mathrm{PGSp}_4(\mathbb{F}_\ell)$. 
Alors soit $G$ contient $\mathrm{PSp}_4(\mathbb{F}_\ell)$, soit il est inclus dans l'un des sous-groupes suivants :\\ 
\begin{itemize} 
\item[(1)] Stabilisateur d'une droite de $V$, \\
\item[(2)] Stabilisateur d'un plan isotrope de $V$ (ou ce qui revient au même, d'une droite isotrope de $H$), \\
\item[(3)] Stabilisateur d'une droite non isotrope de $H$, \\
\item[(4)] Stabilisateur d'un plan non dégénéré de $H$, \\
\item[(5)] Le groupe $\mathrm{PGL}_2(\mathbb{F}_\ell)$ plongé de manière principale, \\
\item[(6)] Un groupe contenant un sous-groupe d'indice $\leq 2$ qui est extension de $\mathcal{A}_5$, ou de $\mathcal{S}_5$, par $(\mathbb{Z}/2\mathbb{Z})^4$, \\
\item[(7)] Un groupe contenant un sous-groupe d'indice $\leq 2$ isomorphe \`a $\mathcal{A}_6, \mathcal{S}_6$ ou $\mathcal{A}_7$ (et dans ce dernier cas on a $\ell=7$). \\
\end{itemize}
\end{theo}
\begin{remarque}\rm Dans la terminologie de Mitchell, si $U$ est un sous-groupe de $\mathrm{PSp}_4(\mathbb{F}_\ell)$ du type (2), (3), (4) ou (5), alors $U$ est le stabilisateur d'une congruence parabolique, d'une congruence hyperbolique ou elliptique de $\Lambda^{2}V$ conte\-nant $\widetilde{A}$, d'une quadrique de $\Lambda^{2}V$ ou d'une cubique tordue de $\Lambda^{2}V$.
\end{remarque}

\section{Étude de l'image}

Notre objectif dans ce chapitre est de démontrer le théorème \ref{images} de l'introduction. Soient $(j,k)$ l'un des quatre couples de l'introduction, $\ell$ un nombre premier, $\rho$ la représentation de $\mathrm{Gal}(\overline{\mathbb{Q}}/\mathbb{Q})$ notée $r_{j,k,\ell}$ dans l'introduction, et $\overline{\rho}:=\overline{r_{j,k,\ell}}$ sa représentation résiduelle. Comme on l'a déjà expliqué, $\overline{\rho}$ est définie sur $\mathbb{F}_\ell$, et prend ses valeurs dans le groupe des similitudes symplectiques d'une forme bilinéaire alternée non dégénérée bien choisie, de sorte que l'on peut supposer que l'on a  $$\mathrm{Im} \, \overline{\rho} \, \subset \mathrm{GSp}_4(\mathbb{F}_\ell).$$ 
Conformément à la section $4$, on notera $V$ le $\mathbb{F}_\ell$-espace vectoriel de dimension $4$ sous-jacent, muni d'une forme bilinéaire alternée non dégénérée. 
On note de plus $G_\ell$ l'image de $\overline{\rho}$ dans $\mathrm{GSp}_4(\mathbb{F}_\ell)$ et $PG_\ell$ l'image de $G_\ell$ dans $\mathrm{PGSp}_4(\mathbb{F}_\ell)$.\\

On se place dans   les hypothèses du théorème \ref{irreductible}, de sorte que la représentation $\overline{\rho} = V \otimes \overline{\mathbb{F}_\ell}$ est irréductible. En particulier, on a $\ell\geq 7$. On suppose que $PG_\ell$ ne contient pas $\mathrm{PSp}_4(\mathbb{F}_\ell)$. Il est donc contenu dans l'un des sous-groupes maximaux du type (1) \`a (7) dans la liste du théorème \ref{classification2}. Nous allons procéder à une étude au cas par cas. L'idée est de montrer que la forme de l'image est incompatible soit avec l'action de l'inertie quand $\ell$ est assez grand, soit avec l'action du Frobenius en $p\neq\ell$ quand $\ell$ est petit. On rappelle que la représentation $H \subset \Lambda^2 V$ a été introduite à la section $4$ et que $\Lambda^2 V$ est muni d'une forme bilinéaire symétrique non dégénérée naturelle. \\

%
%

  %
%

{\it Cas (1) ou (2) : Stabilisateur d'une droite, ou d'un plan isotrope, de $V$.} Ce n'est pas possible car  la représentation est irréductible.  \\

{\it Cas (3) ou (4) : Stabilisateur d'une droite ou d'un plan non dégénérés de $H$.} Dans les deux cas, $G_\ell$ préserve un plan non dégénéré $P \subset \Lambda^2 V$. En particulier, la représentation naturelle de $\mathrm{ Gal}(\overline{\mathbb{Q}}/\mathbb{Q})$ sur $P \otimes \chi_\ell^{-w}$, avec  $w=j+2k-3$, définit un homomorphisme 
		$$r: \mathrm{ Gal}(\overline{\mathbb{Q}}/\mathbb{Q})\longrightarrow \mathrm{ O}(P)$$
(le groupe orthogonal du plan quadratique $P$). Il est bien connu que les vecteurs isotropes de $\Lambda^2 V \otimes \overline{\mathbb{F}}_\ell$ sont exactement les tenseurs purs ("quadrique de Klein"). Comme $\rho$ est irréductible, elle n'admet pas de plan stable, et donc $\mathrm{ Gal}(\overline{\mathbb{Q}}/\mathbb{Q})$ ne stabilise aucune des deux droites isotropes de $P \otimes \overline{\mathbb{F}_\ell}$. Son image n'est donc pas incluse dans $\mathrm{ SO}(P)$. Autrement dit, l'homomorphisme 
$$\epsilon := \det r : \mathrm{ Gal}(\overline{\mathbb{Q}}/\mathbb{Q}) \rightarrow \{\pm 1\}$$
est non trivial. Il est continu, et non ramifié hors de $\ell$, par construction. Il définit donc une extension quadratique de $\mathbb{Q}$ non ramifiée hors de $\ell$. Mais il est bien connu qu'une telle extension est nécessairement ramifiée en $\ell$ (c'est $\mathbb{Q}\left(\sqrt{(-1)^\frac{\ell-1}{2}\ell}\right)$), autrement dit que $\epsilon(\mathrm{ I}_\ell)=\{\pm 1\}$. On a donc montré que 
\begin{equation}
\label{epsilon}
\epsilon=\chi_{\ell}^\frac{\ell-1}{2}.
\end{equation}
D'autre part, on a $\ell>2$ donc tout élément de $\mathrm{O}(P)$ est semi-simple. L'inertie sauvage de $\mathrm{I}_\ell$ agit trivialement dans $r$, et le sous-groupe $r (\mathrm{ I}_\ell)$ est cyclique (d'ordre premier à $\ell$). Si $\sigma$ en est un générateur, on a donc $\det(\sigma)=-1$ : c'est une symétrie orthogonale. Ainsi, la restriction de $r$ à $\mathrm{ I}_\ell$ est somme du caractère trivial et d'un caractère d'ordre $2$ : c'est donc $1 \oplus \chi_\ell^{\frac{\ell-1}{2}}$.\\
 
Pour éliminer cette éventualité, faisons d'abord l'hypothèse supplémentaire $$\ell>2w,$$
avec $w=j+2k-3$.
Sous cette hypothèse, qui entraîne $\ell> j+2k-2$ et $2j+3k-4 \leq \ell -1$, la proposition \ref{description} et le lemme \ref{inertielambda} assurent que les poids d'inertie modérée agissant sur $\Lambda^2 V$ sont les entiers distincts 
\begin{equation}  k-2 <j+k-1 < j+2k-3 < j+3k-5 < 2j+3k-4\label{listepoidsl2} \end{equation}
(on a dans tous les cas $j \geq 4$, $k \geq 6$), avec $j+2k-3$ de multiplicité $2$. En particulier, les deux poids de l'inertie modérée de $r \otimes \chi_\ell^w \subset \Lambda^2 V$ sont dans cette liste. Mais d'après le paragraphe précédent, et l'hypothèse $\ell > 2w$, ces deux poids sont $w$ et $w+\frac{\ell-1}{2}$. En particulier, $\frac{\ell-1}{2}+w$ est dans la liste \eqref{listepoidsl2}, ce qui est absurde par l'inégalité $\frac{\ell-1}{2}+w \geq 2w > 2j+3k-4$.\\

Expliquons maintenant comment éliminer les cas restants, pour lesquels on a $7 \leq \ell \leq 41$. Regardons l'action de $\mathrm{Frob}_p$ sur $\Lambda^2 V \otimes \chi_\ell^{-w}$ (avec $p \neq \ell$). C'est un élément de $\mathrm{SO}(H)$ dont le polynôme caractéristique est la réduction modulo $\ell$ de 
		$$R_p(t)=(t-1)(t^4- a t^3 + b t^2 -a t + 1)$$
avec $a=\left( \frac{\tau_{j,k}(p)^2 - \tau_{j,k}(p^2)}{2 \, p^{w}} - 2\right)$ et $b=\tau_{j,k}(p^2)/p^w +2$, comme on le vérifie facilement. Donc, si $p$ est un nombre premier qui n'est pas un carré modulo $\ell$, alors $r(\mathrm{Frob}_p) \in \mathrm{O}(P)-\mathrm{SO}(P)$ d'après (\ref{epsilon}) et donc $-1$ est valeur propre, i.e $R_p(-1)=0$. On peut alors procéder de la manière suivante. On constate d'abord que l'un des premiers $2$, $3$, $5$ est toujours un non-carré modulo $\ell$ lorsque $7 \leq \ell \leq 41$. On vérifie dans tous les cas que si $\overline{r_{j,k,\ell}}$ est irréductible (i.e. que le triplet $(j,k,\ell)$ est dans la liste du théorème \ref{irreductible}), et si $(j,k,\ell)$ n'est ni $(6,8,13)$, ni $(4,10,17)$, alors il existe toujours $p \in \{2, 3, 5\}$ non carré modulo $\ell$ et tel que $R_p(-1) \neq 0$. Traitons par exemple le cas $(j,k)=(6,8)$. Le cas $\ell=13$ est exclus par l'énoncé.  De plus, on vérifie que l'on a $R_2(-1)=0$, $R_3(-1)= -2^7 \cdot 5^6 / 3^{13}$ et $R_5(-1)=-2^5\cdot 3^{14} \cdot 13^2 \cdot 5^{-15}$. On a de plus $\ell \neq 11$ car $\overline{r_{6,8,11}}$ est réductible. On conclut car soit $3$, soit $5$, est non-carré modulo $\ell$.\\

{\it Cas (5): Stabilisateur d'une cubique de $\mathrm{P}(V)$. } Dans ce cas, pour tout élément $\sigma$ de $G_\ell$, il existe $x,y,z \in \overline{\mathbb{F}_\ell}^\ast$ tels que $\sigma$ admet pour valeurs propres dans $V \otimes  \overline{\mathbb{F}_\ell}$ les éléments $x^a y^{3-a} z$, avec $0 \leq a \leq 3$. La restriction de $\overline{\rho}$ à $\mathrm{I}_\ell$ se factorise par un quotient fini $K$ de ce dernier, qui est produit semi-direct d'un groupe cyclique $\langle \tau \rangle$ d'ordre premier \`a $\ell$ par un $\ell$-groupe fini (inertie sauvage). Appliquons l'observation précédente à l’élément $\sigma = \overline{\rho}(\tau)$. On sait que la semi-simplifiée de $\overline{\rho}_{|\mathrm{I}_\ell}$ est somme de quatre caractères $\chi_1,\dots,\chi_4$. Les valeurs propres précédentes s'écrivent donc également $\chi_i(\tau)$, $i=1,\cdots, 4$. On en déduit que quitte à renuméroter les $\chi_i$, on peut supposer que l'on
 a les égalités 
$$ \chi_1(\tau) / \chi_2(\tau) = \chi_2(\tau)/\chi_3(\tau) = \chi_3(\tau)/\chi_4(\tau).$$
Autrement dit, on a $\chi_1 \chi_3 = \chi_2^2$ et $\chi_2 \chi_4 = \chi_3^2$. Un argument similaire à celui du lemme \ref{inertielambda} montre que sous l'hypothèse $\ell > 2w$, l'ensemble des poids de l'inertie modérée de $V$ est de la forme $\{k_1,k_2,k_3,k_4\}$ avec 
$$k_1+k_3 = 2k_2 \, \, \, {\rm et}\, \, \, \, k_2+k_4=2k_3.$$
On en déduit que $k_i$ et $k_{i+2}$ ont même parité. C'est impossible, car dans la liste $\{k_1,k_2,k_3,k_4\}=\{0,k-2,j+k-1,j+2k-3\}$, les éléments de même parité sont consécutifs (noter que l'on a $j>2$).\\

Enfin, pour traiter les cas restants, à savoir $7 \leq \ell \leq 41$, on observe que si $p \neq \ell$, les racines de $R_p(t)$ dans $\overline{\mathbb{F}_\ell}$ sont de la forme $1,u^2,u,u^{-1},u^{-2}$ avec $u \in \overline{\mathbb{F}_\ell}^\times$. En effet, l'action de $\mathrm{Frob}_p$ sur $V$ a des valeurs propres de la forme $x^ay^{3-a}z$ avec $0\leq a\leq 3$, $z\in \overline{\mathbb{F}_\ell}^\times$ et $x^3y^3z^2=p^{j+2k-3}=\chi_\ell^{w}(\mathrm{Frob}_p)$. Son action sur $\Lambda^2 V$ a donc pour valeurs propres $x^5yz^2,x^4y^2z^2,x^3y^3z^2,x^3y^3z^2,x^2y^4z^2,xy^5z^2$, d'où l'on déduit  les valeurs propres de l'action sur $\Lambda^2 V \otimes \chi_\ell^{-w}$ en divisant par $x^3y^3z^2$ et en posant $u=xy^{-1}$.   
Posons $Q_p(t) = R_p(t)/(t-1)$, alors les images dans $\mathbb{F}_{\ell}[t]$ de $Q_p(t)$ et $Q_p(t^2)$ ne sont pas premières entre elles. On vérifie à l'aide du logiciel \texttt{PARI} \cite{pari} que si $\overline{r_{j,k,\ell}}$ est irréductible, et si $7 \leq \ell \leq 41$, alors il existe toujours un nombre premier $p \in \{2, 3, 5\}$ tel que cette propriété est mise en défaut. Par exemple si $(j,k)=(8,8)$, auquel cas $\ell = 13$ et $17$ sont exclus (cas réductibles), le nombre premier $p=2$ convient toujours.\\

{\it Cas (6) et (7).} Supposons $\ell \geq 7$. Dans ces cas, on observe que tout élément de $PG_\ell$ est d'ordre divisant $16$, $20$,$24$ ou, si $\ell=7$, $14$. Supposons d'abord $\ell >7$, alors $\ell$ ne divise pas l'ordre de $G_\ell$, de sorte que l'inertie sauvage en $\ell$ agit trivialement dans $V$. Le groupe $\mathrm{I}_\ell$ agit donc de manière diagonalisable sur $V \otimes \overline{\mathbb{F}_\ell}$, à travers un quotient cyclique d'ordre premier à $\ell$. Soient $\sigma \in G_\ell$ un générateur de l'image de $\mathrm{I}_\ell$ et $N\geq 1$ l'ordre de l'image de $\sigma$ dans $PG_\ell$. D'après l'analyse ci-dessus, l'entier $N$ divise $16,20$ ou $24$. En particulier $N \leq 24$. De plus, $\sigma^N$ agit par une homothétie sur $V$. Ainsi, si $\chi_1,\cdots,\chi_4$ désignent les caractères de $\mathrm{I}_\ell$ intervenant dans $V \otimes \overline{\mathbb{F}_\ell}$, alors les $\chi_i^N$, $i=1,\dots,4$, sont tous égaux. Supposons de plus que l'on a 
$$ \ell -1 \geq 24 w,$$
où $w=j+2k-3$. Pour chaque entier $1 \leq i \leq 4$, écrivons $\chi_i$ sous la forme $\psi_M^{a_0 + \ell a_1 + \dots +\ell^{M-1} a_{M-1}}$ avec $M\geq 1$ convenable et $0 \leq a_s \leq \ell-1$ pour tout $s$. On sait que la collection des $a_0$ est exactement l'ensemble des poids de l'inertie modérée de $V$ qui sont donnés par la proposition \ref{description}. Sous l'hypothèse $\ell -1 \geq 24 w$, les classe modulo $\ell$ de ces poids multipliés par $N$ restent distinctes. C'est bien entendu absurde. \\

Cette analyse montre que si $\ell > 24\cdot 21=504$, alors on n'est jamais dans les cas (6) ou (7). Pour conclure, expliquons comment montrer que ces cas ne sont en fait pas possibles pour tout $\ell \geq 7$. Il n'y a qu'un nombre fini de cas à écarter. On procède de la manière suivante. Regardons à nouveau l'action de $\mathrm{Frob}_p$ sur $\Lambda^2 V \otimes \chi_\ell^{-w}$ (avec $p \neq \ell$). C'est un élément de $\mathrm{SO}(H)$ dont le polynôme caractéristique est la réduction modulo $\ell$ du polynôme  $R_p(t)$ déjà introduit plus haut. Soit $Q$ le polynôme $(t^{16}-1)(t^{20}-1)(t^{24}-1)$. On vérifie, par exemple en utilisant le logiciel \texttt{PARI} \cite{pari}, que pour chacun des $4$ couples $(j,k)$, et pour tout $7 \leq \ell <504$ tel que $(j,k,\ell)$ soit comme dans le théorème \ref{images}, alors $R_2(t)$,  $R_3(t)$, $R_5(t)$ ou $R_{13}(t)$  ne divise pas $Q(t)^4$ dans $\mathbb{F}_\ell[t]$. Par exemple, pour $(j,k)=(6,8)$, et pour $7 \leq \ell <504$, alors $R_2(t)$ (resp. $R_3(t)$, resp. $R_5(t)$) divise $Q(t)^4$ dans $\mathbb{F}_\ell[t]$ si, et seulement si $\ell = 7,11,13,17,23,47, 103, 107, 109, 461$ (resp. $\ell = 7,11,17,23,359$, resp. $\ell=7,11,31$). Pour éliminer le cas $\ell=7$, on constate que $R_{13}(t)$ ne divise pas $Q(t)^4$ dans $\mathbb{F}_7[t]$.\\

%

%
%
%

{\it L’image dans $\mathrm{GSp}_4(\mathbb{Z}_\ell)$.} La propriété (ii) de la représentation $r_{j,k,\ell}$, et la surjectivité du caractère cyclotomique $\chi_\ell : \mathrm{Gal}(\overline{\mathbb{Q}_\ell}/\mathbb{Q}_\ell) \rightarrow \mathbb{Z}_\ell^\times$, montrent que $\nu(G_\ell)$ est le sous-groupe des puissances $w$-ièmes du groupe $\mathbb{Z}_\ell^\times$, avec $w=j+2k-3$. Il suffit donc de démontrer que si $PG_\ell$ contient $\mathrm{ PSp}_4(\mathbb{F}_\ell)$, et si $\ell > 3$, alors l'image de $\rho$ contient $\mathrm{Sp}_4(\mathbb{Z}_\ell)$.\\

On rappelle que si $\ell >2$, le seul sous-groupe distingué non trivial de $\mathrm{ Sp}_4(\mathbb{F}_\ell)$ est son centre $\{\pm 1\}$. Cela entraîne que si un sous-groupe $H$ de $\mathrm{ Sp}_4(\mathbb{F}_\ell)$ s'envoie surjectivement sur $\mathrm{ PSp}_4(\mathbb{F}_\ell)$, et si $\ell >2$, alors $H = \mathrm{ Sp}_4(\mathbb{F}_\ell)$ (sinon $H$ serait un sous-groupe d'indice $2$, nécessairement distingué, de  $\mathrm{ Sp}_4(\mathbb{F}_\ell)$). Ceci étant dit, supposons que $PG_\ell$ contienne $\mathrm{ PSp}_4(\mathbb{F}_\ell)$. Notons $\mathrm{D}(K)$ le groupe dérivé du groupe $K$. On constate que $\mathrm{D}(G_\ell)$ (qui est un sous-groupe de $\mathrm{Sp}_4(\mathbb{F}_\ell)$) s'envoie surjectivement sur $\mathrm{D}(\mathrm{PSp}_4(\mathbb{F}_\ell))=\mathrm{PSp}_4(\mathbb{F}_\ell)$. La remarque précédente montre donc que $\mathrm{Sp}_4(\mathbb{F}_\ell) = \mathrm{ D}(G_\ell) \subset G_\ell$. On conclut par le lemme suivant, dû à Serre \cite[page 52]{serreo}.

\begin{lemme}\label{serre}
 Soient $\ell$ un nombre premier $>3$ et $G$ un sous-groupe fermé de $\mathrm{GSp}_4(\mathbb{Z}_\ell)$ tel que son image dans $\mathrm{GSp}_4(\mathbb{F}_\ell)$ contienne $\mathrm{Sp}_4(\mathbb{F}_\ell)$. Alors $G$ contient $\mathrm{Sp}_4(\mathbb{Z}_\ell)$.
\end{lemme}

%

\newpage

\section{Calculs explicites}

%
%
%
%
%
%

 {\renewcommand{\arraystretch}{1.5}

\begin{table}[h]
\begin{center}
\small
\begin{tabular}{|c|c|c|c|c|}

\hline
$q$ &$\tau_{6,8}(q) $& $\tau_{8,8}(q) $ \\

\hline
$2$ & $0$ & $1344$  \\
\hline
$3$ & $-27000$ & $-6408$   \\
\hline
$4$ & $409600$& $348160$   \\
\hline
$9$ & $333371700$&$748312020$ \\
\hline
$5$ & $2843100$&$-30774900$ \\
\hline
$25$ & $-15923680827500$&$-395299890927500$ \\
\hline
$7$ & $-107822000$&$451366384$ \\
\hline
$49$ & $-253514141409500$&$-155544419215478300$\\
\hline
$13$ & $9952079500$&$-328006712228$ \\
\hline
$169$ & $-4843967045593944889100$&$-596184280686941758305260$ \\
\hline
\hline
$q$ &  $\tau_{12,6}(q)$& $\tau_{4,10}(q)$ \\

\hline
$2$ &  $-240 $& $-1680$ \\
\hline
$3$ &  $68040$ & $55080$  \\
\hline
$4$ &  $4276480$ & $-700160$ \\
\hline
$9$ & $-8215290540$ & $1854007380 $\\
\hline
$5$ & $14765100$ & $-7338900$\\
\hline
$25$ & $722477627072500$ & $-904546757727500 $\\
\hline
$7$ & $334972400$ & $609422800 $\\
\hline
$49$ & $-1126868422025500700$ & $-391120313742441500$\\
\hline
$13$ & $91151149180$ & $-263384451140$\\
\hline
$169$ & $-299941151717771094659180$ & $323494600665947822387860$\\
\hline
\end{tabular}

\end{center}

\caption{Quelques valeurs propres des opérateurs de Hecke en genre 2 \cite{chenevier}.}

\label{proprehecke}

\end{table}

%
%
%
%
%
%

\begin{table}[h]
\begin{center}
\begin{tabular}{|c|c|c|}
\hline
 $p$ & $2$  & $3$  \\
\hline
$A_{6,8}(p)$ & $3^3.5.37.137.197.2039$   &$2^{12}.5.17.109.54196568909$ \\
\hline
$B_{6,8}(p)$ &$2^{12}.3^3.5.11^2 $ & $2^{12}.3^9.5.11^2.53$   \\
\hline
$C_{6,8}(p)$ & $-2^{14}.3^{2}.5^2.13^2.229$& $-2^{14}.3^{10}.5^2.3251.5741$  \\
\hline
$D_{6,8}(p)$ &$-2^{6}.3^{2}.5.2731.2939.4567$ & $-2^{14}.3^{6}.5.11.116167.61722049878337$ \\
\hline
\end{tabular}
\end{center}
\caption{Factorisations pour $(j,k)=(6,8)$.}
\label{(6,8)}
\end{table}

\begin{table}[h]
\begin{center}
\begin{tabular}{|c|c|c|}
\hline
 $p$ & $2$  & $3$ \\
\hline
$A_{8,8}(p)$ &$3^{2}.488358740729$ &  $2^{12}.11.13.43.67.64841599673 $  \\ 
\hline
$B_{8,8}(p)$ &$2^{13}.3^2.5^2.13.43$ & $ 2^{17}.3^8.5^2.11.13.67 $ \\
\hline
$C_{8,8}(p)$ & $-2^{13}.3^2.17.23^{3}.37$  & $-2^{13}.3^9.23^2.67.711097  $ \\
\hline 
$D_{8,8}(p)$ & $-2^6.3^2.5^3.7.301577.1891283$  &   $-2^9.3^6.5.41.1019.12011419289294459$ \\ 
\hline

\end{tabular}
\end{center}
\caption{Factorisations pour $(j,k)=(8,8)$.}
\label{(8,8)}
\end{table}

\begin{table}[h]
\begin{center}
\begin{tabular}{|c|c|c|}
\hline
 $p$ & $2$  & $3$ \\
\hline
 $A_{12,6}(p)$ & $3^3.5.32581834951$  & $2^{14}.5.11.121424757430201$ \\   
\hline
$B_{12,6}(p)$ &$2^{13}.3^5.5.7.13.19 $  & $ 2^{13}.3^8.5.7.13.681589  $ \\
\hline
$C_{12,6}(p)$& $-2^{14}.3^3.7.13.257$ & $-2^{14}.3^9.7.13.107.1801  $ \\
\hline
$D_{12,6}(p) $ & $ -2^4.3^3.5.7.612083.128494763$   &$-2^8.3^4.5.13.757.41442159469563851813  $ \\
\hline
\end{tabular}

\end{center}
\caption{Factorisations pour $(j,k)=(12,6)$.}
\label{(12,6)}
\end{table}

\begin{table}[h]
\begin{center}
\begin{tabular}{|c|c|c|}
\hline
 $p$ & $2$ & $3$ \\
\hline
$A_{4,10}(p) $ & $ 3^3.5^2.11.10861.54581$ & $2^{11}.5^2.11.41.103.739.62253281  $ \\  
\hline 
$B_{4,10}(p) $ & $2^{12}.3^2.5.11.41$   &$2^{11}.3^{12}.5.11.41  $ \\
\hline 
$C_{4,10}(p)$ & $-2^{14}.3^4.11.67.36137$ &$     -2^{13}.3^{15}.11.17^3.173.2689  $ \\
\hline
$D_{4,10}(p)$ & $-2^6.3^2.5.19.25841.51063917$ & $    -2^8.3^8.5^2.1151.10828398485532941  $\\
\hline
\end{tabular}
\end{center}
\caption{Factorisations pour $(j,k)=(4,10)$.}
\label{(4,10)}
\end{table}

\newpage

\bibliographystyle{siam}
\bibliography{bibliographie}
\end{document}